\definecolor{lgray}{RGB}{240,240,240}
\definecolor{webgreen}{rgb}{0,.5,0}
\definecolor{webbrown}{rgb}{.6,0,0}
\definecolor{RoyalBlue}{cmyk}{1, 0.50, 0, 0}
\newcommand{\C}		{\mathbb{C}}
\newcommand{\supp}{\mathrm{supp}}
\newcommand{\im}{\mathrm{Im}}
\renewcommand{\det}{\mathrm{det}}
\newcommand{\ic}{\mathrm{i}}
\newcommand{\qasq}{\quad \text{as} \quad}
\newcommand{\qandq}{\quad \text{and} \quad}
\newcommand{\vb}[1]{{\left\vert\kern-0.25ex\left\vert\kern-0.25ex\left\vert #1 
    \right\vert\kern-0.25ex\right\vert\kern-0.25ex\right\vert}}
\newcommand{\rhy}   {\textnormal{RHP}-${\boldsymbol Y}$}
\newcommand{\rhx}   {\textnormal{\(\bar\partial\)RHP}-${\boldsymbol X}$}
\newcommand{\rhn}   {\textnormal{RHP}-${\boldsymbol N}$}
\newcommand{\rha}   {\textnormal{RHP}-${\boldsymbol A}$}
\newcommand{\pbd}   {\textnormal{\(\bar\partial\)P}-${\boldsymbol D}$}
\newenvironment{myitemize}{\begin{itemize}}{\end{itemize}}
\newtcolorbox{mathbox}[1][]{reset, enhanced, tcbox raise base, colback=black!5, colframe=webbrown, boxrule=0.2mm,after=,#1}
\begin{document}

\title{On Smooth Perturbations of Chebysh\"ev Polynomials and \( \bar\partial \)-Riemann-Hilbert Method}

\author{Maxim L. Yattselev}

\address{Department of Mathematical Sciences, Indiana University-Purdue University Indianapolis, 402~North Blackford Street, Indianapolis, IN 46202}

\address{Keldysh Institute of Applied Mathematics, Russian Academy of Science, Miusskaya Pl. 4, Moscow, 125047 Russian Federation}

\email{\href{mailto:maxyatts@iupui.edu}{maxyatts@iupui.edu}}

\thanks{The research was supported in part by a grant from the Simons Foundation, CGM-706591.}

\subjclass[2020]{42C05}

\keywords{Orthogonal polynomials, strong asymptotics, matrix Riemann-Hilbert approach.}

\begin{abstract}
\( \bar\partial \)-extension of the matrix Riemann-Hilbert method is used to study asymptotics of the polynomials \( P_n(z) \) satisfying orthogonality relations
\[
\int_{-1}^1 x^lP_n(x)\frac{\rho(x)dx}{\sqrt{1-x^2}}=0, \quad l\in\{0,\ldots,n-1\},
\]
where \( \rho(x) \) is a positive \( m \) times continuously differentiable function on \( [-1,1] \), \( m\geq3 \).
\end{abstract}

\maketitle

\section{Main Results}

In this note we are interested in the asymptotic behavior of monic polynomials \( P_{n,i}(x) \),  \( \deg (P_{n,i})=n \), dependent on a parameter \( i\in\{1,2,3,4\} \), satisfying orthogonality relations
\begin{mathbox}[top=-1mm,bottom=1mm]
\begin{equation}
\label{ortho}
\int_{-1}^1 x^lP_{n,i}(x)\frac{\rho(x)|v_i(x)|dx}{\sqrt{1-x^2}} =0, \quad l\in\{0,\ldots,n-1\},
\end{equation}
\end{mathbox}
where \( \rho(x) \) is a positive and smooth function on \( [-1,1] \) and
\[
v_1(z) \equiv1, \quad v_2(z) = z^2-1, \quad v_3(z) = z+1, \qandq v_4(z) = z-1.
\]
That is, \(  P_{n,i}(z) \) are smooth perturbations of the Chebysh\"ev polynomials of the \( i \)-th kind. Besides polynomials themselves, we are also interested in the asymptotic behavior of their recurrence coefficients. That is, numbers \( a_{n,i}\in[0,\infty) \) and \( b_{n,i}\in(-\infty,\infty) \) such that
\begin{mathbox}[top=0mm,bottom=1mm]
\[
xP_{n,i}(x) = P_{n+1,i}(x) + b_{n,i} P_{n,i}(x) + a_{n,i}^2 P_{n-1,i}(x).
\]
\end{mathbox}

To describe the results, let \( w(z) := \sqrt{z^2-1} \) be the branch analytic in \( \C\setminus[-1,1] \) such that \( w(z)/z \to 1 \) as \( z\to\infty \).  The Szeg\H{o} function of the weight \( \rho(x) \) is defined by
\begin{mathbox}[top=-1mm,bottom=1mm]
\begin{equation}
\label{S}
S(z) := \exp\left\{\frac{w(z)}{2\pi\ic}\int_{-1}^1\frac{\log\rho(x)}{z-x}\frac{dx}{w_+(x)}\right\}, \quad z\in\overline\C\setminus[-1,1],
\end{equation}
\end{mathbox}
which is an analytic and non-vanishing function in the domain of its definition satisfying
\begin{mathbox}[top=-1mm,bottom=2mm]
\begin{equation}
\label{Sj}
S_+(x)S_-(x) = \rho^{-1}(x), \quad x\in[-1,1].
\end{equation}
\end{mathbox}
Since \( \rho(x) \) is positive, it holds that \( S_+(x) = \overline{S_-(x)} \) for \( x\in [-1,1] \), and, utilizing the full power of Plemelj-Sokhotski formulae, \eqref{Sj} can be strengthen to
\begin{mathbox}[top=-1mm,bottom=1mm]
\begin{equation}
\label{theta}
\sqrt{\rho(x)}S_\pm(x) = e^{\pm\ic\theta(x)}, \quad \theta(x) := \frac{\sqrt{1-x^2}}{2\pi} \fint_{-1}^1 \frac{\log\rho(t)}{t-x} \frac{dt}{\sqrt{1-t^2}},
\end{equation}
\end{mathbox}
where \( \fint \) is the integral in the sense of the principal value. Further, let
\begin{mathbox}[top=-2mm,bottom=1mm]
\begin{equation}
\label{phi}
\varphi(z) := z + w(z)
\end{equation}
\end{mathbox}
be the conformal map of \( \overline\C\setminus[-1,1] \) onto \( \C\setminus\{z:|z|\geq1\} \) such that \( \varphi(z)/z \to 2 \) as \( z\to\infty \). One can readily verify that
\begin{mathbox}[top=-1mm,bottom=1mm]
\begin{equation}
\label{phij}
\varphi_\pm(x) = x\pm\ic \sqrt{1-x^2} = e^{\pm\ic\arccos(x)}, \quad x\in[-1,1].
\end{equation}
\end{mathbox}
 Finally, we explicitly define the Szeg\H{o} functions of the weights \( |v_i(x)| \). Namely, set
\begin{mathbox}[top=-1mm,bottom=1mm]
\begin{equation}
\label{Si}
\begin{cases}
S_1(z) :\equiv 1, & S_3(z) := \big(\varphi(z)/(z+1)\big)^{1/2}, \medskip \\
S_2(z) := \varphi(z)/w(z), & S_4(z) := \big(\varphi(z)/(z-1)\big)^{1/2},
\end{cases}
\end{equation}
\end{mathbox}
where the square roots are principal and one needs to notice that the images of \( \overline\C\setminus[-1,1] \) under \( (z+1)/\varphi(z) \) and \( (z-1)/\varphi(z) \) are domains symmetric with respect to conjugation whose intersections with the real line are equal to \( (0,2) \) (so the square roots are indeed well defined). These functions satisfy
\begin{mathbox}[top=-1mm,bottom=2mm]
\begin{equation}
\label{Sij}
S_{i+}(x)S_{i-}(x) = |S_{i\pm}(x)|^2=1/|v_i(x)|, \quad x\in(-1,1).
\end{equation}
\end{mathbox}
Observe also that \( S_1(\infty) =1 \), \( S_2(\infty) = 2 \), and \( S_3(\infty) = S_4(\infty) =\sqrt2 \). Moreover, one can readily deduce from \eqref{phij} and \eqref{Sij} that
\begin{mathbox}[top=-1mm,bottom=1mm]
\begin{equation}
\label{thetai}
S_{i\pm}(x) = \frac{e^{\pm\ic\theta_i(x)}}{\sqrt{|v_i(x)|}}, \quad 
\begin{cases}
\theta_1(x) :\equiv0, & \theta_2(x) := \arccos(x)-\frac\pi2, \smallskip \\
\theta_3(x) := \frac12\arccos(x), & \theta_4(x) :=\frac12\arccos(x)-\frac\pi2.
\end{cases}
\end{equation}
\end{mathbox}

Recall that the modulus of continuity of a continuous function \( f(x) \) on \( [-1,1] \) is given by
\[
\omega(f;h) := \max_{|x-y|\leq h,~x,y\in[-1,1]} |f(x)-f(y)|.
\]

\begin{theorem}[]{}{1}
Assume that \( \rho(x) \) is a strictly positive \( m \) times continuously differentiable function on \( [-1,1] \) for some \( m \geq 3 \). Set
\[
\varepsilon_n:= \frac{\log n}{n^m}\omega\left((1/\rho)^{(m)};1/n\right).
\] 
Then it holds for any \( i\in\{1,2,3,4 \} \) that
\[
P_{n,i}(z) = \left( 1 + O(\varepsilon_n) \right) \frac{(S_iS)(z)}{(S_iS)(\infty)} \left(\frac{\varphi(z)}2\right)^n
\]
uniformly on closed subsets of \( \overline\C\setminus[-1,1] \) and
\[
P_{n,i}(x) = \frac{\cos\big(n\arccos(x) + \theta(x) + \theta_i(x) \big) + O(\varepsilon_n) }{2^{n-1}(S_iS)(\infty)\sqrt{\rho(x)|v_i(x)|}}
\]
uniformly on \( [-1,1] \). Moreover, it also holds for any \( i\in\{1,2,3,4\} \) that
\[
a_{n,i} = 1/2 + O(\varepsilon_n) \qandq b_{n,i} = O(\varepsilon_n).
\]
\end{theorem}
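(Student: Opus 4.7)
The natural route is the Deift--Zhou nonlinear steepest descent analysis in its \(\bar\partial\)-variant, performed uniformly in \(i\in\{1,2,3,4\}\). I would start from the Fokas--Its--Kitaev matrix \(Y(z)\), whose \((1,1)\)-entry equals \(P_{n,i}(z)\) and whose jump across \([-1,1]\) is given by the weight \(\rho(x)|v_i(x)|/\sqrt{1-x^2}\). The first transformation \(Y\mapsto T\) is the classical \(g\)-function normalization based on the equilibrium (arcsine) measure of \([-1,1]\); this is exactly the place where the factor \((\varphi/2)^n\) enters and where the jumps on \([-1,1]\) become oscillatory of the form \(\mathrm{diag}(\varphi_+^{-n},\varphi_-^{-n})\) times an off-diagonal piece proportional to the weight.

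The crucial step is the \(\bar\partial\)-opening of lenses \(T\mapsto X\). Because \(\rho\) is only \(C^m\), I cannot use an analytic continuation; instead I build a smooth extension of \(1/\rho\) off the interval using its Taylor polynomial of order \(m\) at each point of \([-1,1]\), glued via a cutoff so that \(\bar\partial\) of the extension vanishes to order \(m\) on the axis and is controlled by \(\omega((1/\rho)^{(m)};\cdot)\) nearby. The factor \(|v_i(x)|\) and the denominator \(\sqrt{1-x^2}\) are handled analytically, absorbing them into the Szegő factors \(S_i S\) from \eqref{S}, \eqref{Si}, \eqref{Sij}; this is why all four cases reduce to the same analysis with a common global parametrix. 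After this transformation, \(X\) has no jump on the lens boundaries but satisfies \(\bar\partial X = X\,W\) with \(W\) supported in lens-shaped neighborhoods of \((-1,1)\), having exponential decay from the oscillating factors \(\varphi^{\mp n}\) together with the smoothness-induced decay of \(\bar\partial\) of the extension.

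Next I construct the global parametrix \(N\) out of \((S_iS)(z)\big(\varphi/2\big)^{n\sigma_3}\) composed with the standard Szegő-type matrix of the arcsine weight, and local Airy/Bessel parametrices at \(z=\pm 1\) matching \(N\) to leading order on the boundaries of small disks. Setting \(R(z):=X(z)\,P(z)^{-1}\), where \(P\) is the global parametrix outside the disks and the local one inside, yields a \(\bar\partial\)-RHP with (i) small jumps of order \(O(1/n)\) on the disk boundaries from the local-to-global mismatch and (ii) a \(\bar\partial\)-derivative whose \(L^\infty\)-norm on its support is bounded by \(\omega((1/\rho)^{(m)};1/n)/n^m\) after integrating out the oscillatory exponentials via stationary-phase-type estimates adapted to the smooth setting. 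The standard solid Cauchy operator bound on the resulting integral equation \(R=I+C_{\partial}[R\cdot W]+C_\Sigma[R\cdot(V-I)]\) then produces \(R=I+O(\varepsilon_n)\), where the logarithm in \(\varepsilon_n\) arises from the \(L^\infty\to L^\infty\) bound for the solid Cauchy transform on a region of diameter \(\asymp 1\).

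The hard part, and the step that distinguishes this proof from classical analytic-weight asymptotics, is the quantitative \(\bar\partial\)-small-norm analysis: bounding the solid Cauchy operator so that the full error \(\varepsilon_n\) is inherited by \(R\), and doing so uniformly up to the hard edges \(\pm 1\) where the Bessel parametrices meet the \(\bar\partial\)-region. Once \(R\) is under control, unraveling \(R\mapsto X\mapsto T\mapsto Y\) yields the two asymptotic formulas for \(P_{n,i}\): off \([-1,1]\) from the \((1,1)\)-entry of \(Y\), and on \([-1,1]\) from the boundary values, with the combination \(e^{\ic(n\arccos x+\theta+\theta_i)}+\mathrm{c.c.}\) emerging from \eqref{phij}, \eqref{theta}, \eqref{thetai}. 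Finally, the recurrence coefficients are read off from the large-\(z\) expansion of \(Y\): the standard identities \(a_{n,i}^2 = [Y_1]_{12}[Y_1]_{21}\) and \(b_{n,i}=[Y_1]_{11}-[Y_2]_{12}/[Y_1]_{12}\) (with \(Y(z)=(I+Y_1/z+Y_2/z^2+\cdots)z^{n\sigma_3}\)) combined with \(R=I+O(\varepsilon_n)\) give the claimed \(a_{n,i}=1/2+O(\varepsilon_n)\) and \(b_{n,i}=O(\varepsilon_n)\).
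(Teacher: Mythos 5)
Your overall strategy ($\bar\partial$-steepest descent for a $C^m$ weight) is the right one, but as written it cannot deliver the claimed error rate, and the obstruction is concrete: you install local Bessel/Airy parametrices at $\pm1$ that match the global parametrix only ``to leading order,'' producing jumps of size $O(1/n)$ on the disk boundaries. The small-norm argument then gives $R=\boldsymbol I+O(1/n)$, and every asymptotic formula you extract inherits an $O(1/n)$ error. But $\varepsilon_n=\frac{\log n}{n^m}\omega((1/\rho)^{(m)};1/n)=o(n^{-3})$, so $O(1/n)$ swamps $O(\varepsilon_n)$ and the theorem as stated is not reached. The paper avoids this entirely: for the four Chebysh\"ev-type weights $|v_i|$ the model matrix $\boldsymbol M(z)$ in \eqref{M} solves the model problem \emph{exactly}, endpoint behavior included, so no local parametrices are introduced and no $O(1/n)$ mismatch ever appears. (This is precisely why the theorem is restricted to $v_1,\dots,v_4$ rather than general Jacobi factors $(1-x)^\alpha(1+x)^\beta$.) If you insist on local parametrices, you would have to use the fact that for the half-integer orders $\pm\tfrac12$ the Bessel asymptotic series terminates, so the matching is exact --- but then the disks are doing nothing and you are back to the paper's construction. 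Incidentally, at a hard edge the relevant parametrix is Bessel, not Airy.

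The second structural difference is the extension of the weight. You extend $1/\rho$ by local Taylor polynomials glued with a cutoff (the McLaughlin--Miller device); the paper instead writes $1/\rho=l_n+(1/\rho-l_n)$ with $l_n$ a single polynomial of degree $\le n$ furnished by Kilgore's simultaneous-approximation theorem, extends only the small remainder off $[-1,1]$ (supported in a fixed ellipse $\overline E_r$), and keeps the polynomial part analytic all the way out to a larger ellipse $\partial E_R$. This buys two things at once: the outer jump involves only $l_n$ and is $O(R^{-n})$ by Bernstein--Walsh, hence negligible against $\varepsilon_n$; and the $\bar\partial$-derivative of the remainder is globally of size $n\varepsilon_n/\log n$ times $\sqrt{|1-z^2|}$, which after the solid Cauchy estimate of Lemma~\ref{lem:4} (norm $O(\log n/n)$ against the factor $|\varphi|^{-2n}$ --- note the $\log n$ comes from this interplay, not from the diameter of the support) yields exactly $O(\varepsilon_n)$. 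Your bookkeeping for the $\bar\partial$ term is plausibly of the same order, but without removing the $O(1/n)$ endpoint contribution the argument has a genuine gap.
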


The above results are not entirely new. It is well known \cite[Theorem~11.5]{Szego} that perturbed first and second kind Chebysh\"ev polynomials can be expressed via orthogonal polynomials on the unit circle with respect to the weight \( \rho(\frac12(\tau+1/\tau)) \). Then using \cite[Corollary~5.2.3]{Simon}, that in itself is an extension of ideas from \cite{Bax61}, and Geronimus relations, see \cite[Theorem~13.1.7]{Simon}, one can show that
\[
\sum (n+1)^\gamma \big( |a_{n,1}-1/2| + |b_{n,1}| \big) < \infty
\]
for any \( \gamma\in(0,m-1) \) and \( m\geq 2 \), which is consistent with Theorem~\ref{thm:1}. What is novel in this note is the method of proof. While the Baxter-Simon argument relies on the machinery of Banach algebras, we follow the approach of Fokas, Its, and Kitaev \cite{FIK91,FIK92} connecting orthogonal polynomials to matrix Riemann-Hilbert problems and then utilizing the non-linear steepest descent method of Deift and Zhou \cite{DZ93}. The main advantages of this approach are the ability to get full asymptotic expansions for analytic weights of orthogonality \cite{DKMLVZ99b,KMcLVAV04} and its indifference to positivity of such weights \cite{Ap02,BerMo09,ApYa15}. However, here we deal with non-analytic densities by elaborating on the idea of extensions with controlled \( \bar\partial \)-derivative introduced by Miller and McLaughlin \cite{McLM08} and adapted to the setting of Jacobi-type polynomials by Baratchart and the author \cite{BYa10}.

\section{Weight Extension}

Given \( r>1 \), let \( E_r :=\{z:|\varphi(z)|<r\} \). The boundary \( \partial E_r \)  is an ellipse with foci \( \pm1 \).

\begin{proposition}[]{}{1}
Let \( \rho(x) \) and \( \varepsilon_n \) be as in Theorem~\ref{thm:1}. For each \( r>1 \) and \( n>2m \) there exists a continuous function \( \ell_{n,r}(z) = l_n(z) + L_{n,r}(z) \), \( z\in\C \), such that
\[
\ell_{n,r}(x) = l_n(x), \quad x\in[-1,1],
\] 
where \( l_n(z) \) is a polynomial of degree at most \( n \) satisfying
\[
\supp_{x\in[-1,1]}|l_n(x)| \leq C_\rho^\prime
\]
for some constant \( C_\rho^\prime \) independent of \( n \), while \( L_{n,r}(z) \) and \( \bar\partial L_{n,r}(z) \) are continuous functions in \( \C \) supported by \( \overline E_r \) (in particular, \( L_{n,r}(z)=0 \) for \( z\notin E_r\)) and
\[
\frac{|\bar\partial L_{n,r}(z)|}{\sqrt{|1-z^2|}} \leq C_\rho^{\prime\prime}\frac{n \varepsilon_n}{\log n}, \quad z\in \overline E_r,
\]
for some constant \( C_\rho^{\prime\prime} \) independent of \( n \) and \( r \), where \( \bar\partial := (\partial_x+\ic\partial_y)/2 \), \( z=x+\ic y \).
\end{proposition}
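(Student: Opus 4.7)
The approach I would take follows the $\bar\partial$-extension philosophy of Miller--McLaughlin, refined as in Baratchart and the author's work on Jacobi-type polynomials: replace $1/\rho$ by a polynomial on $[-1,1]$ and build a $\bar\partial$-small correction off the interval, supported in $\overline{E_r}$.

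The first ingredient is a Dzyadyk--Timan-type polynomial approximation of $1/\rho$. Since $1/\rho\in C^m[-1,1]$, one obtains a polynomial $l_n$ of degree $\le n$ satisfying
\[
\bigl|(1/\rho)^{(k)}(x)-l_n^{(k)}(x)\bigr|\le C_\rho\,\omega\bigl((1/\rho)^{(m)};\delta_n(x)\bigr)\,\delta_n(x)^{m-k},\qquad 0\le k\le m,
\]
uniformly in $x\in[-1,1]$, where $\delta_n(x):=\sqrt{1-x^2}/n+1/n^2$ is the standard endpoint-sensitive scale. The $k=0$ case combined with boundedness of $1/\rho$ yields the uniform bound $\sup_{[-1,1]}|l_n|\le C_\rho'$; writing $r_n:=1/\rho-l_n$, the $k=m$ estimate gives the key derivative bound $|r_n^{(m)}(x)|\lesssim\omega((1/\rho)^{(m)};1/n)$.

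The off-axis correction is built from a truncated Taylor expansion of the remainder. For $z=x+\ic y$, the template
\[
T_n(z):=\sum_{k=0}^{m-1}\frac{(\ic y)^k}{k!}\,r_n^{(k)}(x)
\]
satisfies, by term-by-term differentiation, the identity $\bar\partial T_n(z)=\frac{(\ic y)^{m-1}}{2(m-1)!}\,r_n^{(m)}(x)$. I would then take a smooth cutoff $\chi_r$ supported in $\overline{E_r}$, equal to $1$ on a shrunken ellipse $E_{r_0}$ for some $1<r_0<r$, with $|\bar\partial\chi_r|$ uniformly bounded in $n$ and $r$, and multiply $T_n$---adjusted so that its real-axis trace vanishes---by $\chi_r$ to obtain $L_{n,r}$. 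The support condition and the boundary condition $L_{n,r}|_{[-1,1]}=0$ then hold by construction. In the bulk region $\{\chi_r\equiv 1\}$ one inherits the pointwise bound $|\bar\partial L_{n,r}(z)|\lesssim|y|^{m-1}\omega((1/\rho)^{(m)};\delta_n(x))$; in the annular cutoff region $\overline{E_r}\setminus E_{r_0}$, bounded away from $[-1,1]$, the contribution of $\bar\partial\chi_r$ is harmless for the $\sqrt{|1-z^2|}$-weighted estimate. Coupling the bulk construction to the Dzyadyk scale then yields
\[
\frac{|\bar\partial L_{n,r}(z)|}{\sqrt{|1-z^2|}}\lesssim\frac{\omega((1/\rho)^{(m)};1/n)}{n^{m-1}}=\frac{n\varepsilon_n}{\log n}.
\]

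The main obstacle is arranging the two boundary conditions simultaneously: a direct Taylor extension of $1/\rho$ does not vanish on $[-1,1]$, while subtracting its trace reintroduces a stray term in $\bar\partial L_{n,r}$ of order $\omega/n^{m-1}$ that is independent of $y$ and therefore threatens the $\sqrt{|1-z^2|}$-weighted estimate near the endpoints $\pm 1$. Resolving this will require coupling the cutoff---or the Taylor template itself---to the Dzyadyk scale $\delta_n(x)$, so that stray contributions live in regions where $|y|\gtrsim\delta_n(x)$ and are absorbed into the Taylor bound; this adaptive coupling is also a natural source of the extra $\log n$ factor hidden in the definition of $\varepsilon_n$.
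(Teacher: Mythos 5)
Your first ingredient coincides with the paper's: the proof starts from Kilgore's simultaneous-approximation theorem, in the form \( |(\rho^{-1})^{(k)}(x)-l_n^{(k)}(x)|\leq C_{m,k}(1-x^2)^{(m-k)/2}n^{k-m}E_{n-m}((\rho^{-1})^{(m)}) \), followed by Timan's bound \( E_{n-m}(f)\lesssim\omega(f;1/n) \). Note that the factor \( (1-x^2)^{(m-k)/2} \) (rather than your \( \delta_n(x)^{m-k} \) with the extra \( 1/n^2 \) term) is essential: it forces the remainder and its low-order derivatives to vanish at \( \pm1 \). From there the paper diverges from your route entirely. It sets \( \lambda_n(x)=(\rho^{-1}(x)-l_n(x))/\sqrt{1-x^2} \), checks that \( \|\lambda_n\|\lesssim n^{-m}\omega \) and \( \|\lambda_n'\|\lesssim n^{1-m}\omega \) (this is exactly where \( m\geq3 \) is used), extends \( \lambda_n \) by zero to \( \R \) and then to \( \C \) by the slab average \( \Lambda_n(z)=\frac1{|y|}\int_0^{|y|}\lambda_n(x+t)\,dt \), whose \( \bar\partial \)-derivative is bounded by \( \|\lambda_n'\| \) \emph{uniformly in} \( y \), and finally sets \( L_{n,r}=\mp\ic\,w(z)\Lambda_n(z)\psi_r(z) \), so that the factor \( \sqrt{|1-z^2|}=|w(z)| \) in the weighted estimate appears for free.

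The genuine gap in your proposal is quantitative. Your Taylor template gives \( |\bar\partial T_n(z)|=\frac{|y|^{m-1}}{2(m-1)!}|r_n^{(m)}(x)|\asymp|y|^{m-1}\omega((1/\rho)^{(m)};1/n) \), and on a cutoff region containing a fixed ellipse \( E_{r_0} \) there are points with \( |y|\asymp1 \); there your bound is of order \( \omega(\cdot;1/n) \), which exceeds the target \( n^{1-m}\omega(\cdot;1/n)\sqrt{|1-z^2|} \) by a factor of roughly \( n^{m-1} \). So the pointwise estimate you claim in the "bulk region" does not follow; to salvage the template you would have to shrink the support to \( |y|\lesssim1/n \) and then control the cutoff's \( \bar\partial \) (of size \( n \)) against \( |T_n| \) and against \( \sqrt{|1-z^2|}\asymp n^{-1/2} \) near \( \pm1 \), none of which is carried out, or else replace the pointwise bound by an integrated one, which changes the statement being proved. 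Two further points. The "main obstacle" you describe is an artifact of reading the boundary condition as \( L_{n,r}|_{[-1,1]}=0 \); taken literally that renders the proposition trivially true with \( L_{n,r}\equiv0 \), and the condition actually needed later in the paper is \( \ell_{n,r}=\rho^{-1} \) on \( [-1,1] \), i.e.\ \( L_{n,r}=\rho^{-1}-l_n \) there, which the unmodified template already has, so no trace subtraction (and no ensuing stray term) is required. Finally, the \( \log n \) in \( \varepsilon_n \) has nothing to do with the extension --- the proposition's bound is \( n\varepsilon_n/\log n=n^{1-m}\omega(\cdot;1/n) \), with the logarithm cancelled; it enters only later, through the \( \frac{\log n}{n} \) norm estimate for the Cauchy area operator applied to \( |\varphi|^{-2n} \) in Lemma~\ref{lem:4}.
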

\begin{proof}
It follows from \cite[Theorem~9]{Kil00} that for each \( n >2m \) there exists a polynomial \( l_n(z) \) of degree at most \( n \) such that
\[
\left|\left(\rho^{-1}(x)\right)^{(k)} - l_n^{(k)}(x) \right| \leq C_{m,k} (1-x^2)^{\frac{m-k}2} n^{k-m} E_{n-m}\left(\left(\rho^{-1}\right)^{(m)}\right)
\]
for all \( x\in[-1,1] \) and each \( k\in\{0,\ldots,m\} \), where \( C_{m,k} \) is a constant that depends only \( m \) and \( k \) and \( E_j(f) \) is the error of best uniform approximation on the interval \( [-1,1] \) of a continuous function \( f(x) \) by algebraic polynomials of degree at most \( j \). Furthermore, it was shown by Timan, see \cite[Equation~(3)]{Kil00}, that
\begin{eqnarray*}
E_{n-m}(f) &\leq& C_1\omega\left(f;\frac{\sqrt{1-x^2}}{n-m} + \frac1{(n-m)^2}\right) \leq C_1\omega\left(f;\frac2{n-m}\right) \\
&\leq& C_1\omega\left(f;\frac4n\right) \leq 4C_1\omega\left(f;\frac1n\right)
\end{eqnarray*}
for some absolute constant \( C_1 \), where we used that \( n>2m \) and \( \omega(f;2h) \leq 2\omega(f;h)\) (in what follows, we understand that all constants \( C_j \) might depend on \( \rho(x) \), but are independent of \( n \)). Set
\[
\lambda_n(x) := \frac{\rho^{-1}(x)-l_n(x)}{\sqrt{1-x^2}}, \quad x\in[-1,1].
\]
It then holds that \( \lambda_n(x) \) is a continuous function on \( [-1,1] \) that satisfies \( \|\lambda_n\|\leq C_3 \varepsilon_n/\log n \), where \( \|\cdot\| \) is the uniform norm on \( [-1,1] \). Since \( m\geq3\), it also holds that
\[
\lambda_n^\prime(x) = \frac{\left(\rho^{-1}(x)\right)^\prime - l_n^\prime(x)}{\sqrt{1-x^2}} + x\frac{\rho^{-1}(x)-l_n(x)}{\sqrt{(1-x^2)^3}}
\]
is a continuous function on \( [-1,1] \) that satisfies \( \|\lambda_n^\prime\|\leq C_4n\varepsilon_n/\log n  \) (this is exactly the place where condition \( m\geq3 \) is used). Extend \( \lambda_n(x) \) by zero to the whole real line. As the numerator of \( \lambda_n(x) \) together with its first and second derivatives vanishes at \( \pm1 \), \( \lambda_n^\prime(x) \) also extends continuously by zero to the whole real line. The following construction  is standard, see \cite[Proof of Theorem~3.67]{DemengelDemengel}. Define
\[
\Lambda_n(z) := \frac1{|y|}\int_0^{|y|} \lambda_n(x+t)dt, \quad z=x+\ic y,
\]
which, due to continuity of \( \lambda_n(x) \), is a continuous function in \( \C \) satisfying \( \Lambda_n(x)=\lambda_n(x) \) on the real line and \( |\Lambda_n(z)|\leq \|\lambda_n\| \) in the complex plane. Similarly,
\[
\big|\partial_x\Lambda_n(z)\big| = \left|\frac1{|y|}\int_0^{|y|} \lambda_n^\prime(x+t)dt\right| \leq \|\lambda_n^\prime\|
\]
and the function \( \partial_x\Lambda_n(z) \), which is given by the integral within the absolute value in the above equation, is also continuous in \( \C \). Furthermore, we have that
\begin{eqnarray*}
\big|\partial_y\Lambda_n(z)\big| &=& \left| \frac1{y^2}\int_0^{|y|} \big( \lambda_n(x+t) - \lambda_n(x+|y|) \big) dt \right| \\
&\leq& \|\lambda_n^\prime\|\int_0^{|y|} \frac{|y|-t}{y^2}dt =  \frac{\|\lambda_n^\prime\|}2
\end{eqnarray*}
and is also a continuous function in \( \C \). Altogether, since \( \bar\partial = (\partial_x+\ic\partial_y)/2 \), it holds that \( \bar\partial \Lambda_n(z) \)  is a continuous function in \( \C \) that satisfies \( |\bar\partial \Lambda_n(z)| \leq \|\lambda_n^\prime\| \) in the complex plane. Let \( \psi_r(z) \) be any real-valued continuous function with continuous partial derivatives that is equal to one on \( [-1,1] \) and is equal to zero in the complement of \( E_r \). Define 
\[
L_{n,r}(z) := \ic w(z)\Lambda_n(z)\psi_r(z)
\begin{cases} 
-1, & \im(z)\geq0, \\
1, & \im(z)<0.
\end{cases}
\]
Since \( w_\pm(x)=\pm\ic\sqrt{1-x^2} \) for \( x\in[-1,1] \) and \( \Lambda_n(x)=0 \) for \( x\not\in(-1,1) \), it holds that \( L_{n,r}(z) \) is a continuous function in \( \C \) that is supported by \( \overline E_r \) and is equal to \( \rho^{-1}(x) - l_n(x) \) for \( x\in[-1,1] \). Furthermore, since \( \bar\partial(\Lambda_n(z)\psi_n(z)) \) is continuous in \( \C \) and vanishes for \( z=x\not\in(-1,1) \) while \( w_+(x)=-w_-(x) \) for \( x\in(-1,1) \),  \( \bar\partial L_{n,r}(z) \) is also continuous in \( \C \).  Moreover, it holds that
\begin{eqnarray*}
|\bar\partial L_{n,r}(z)| &=& \sqrt{|1-z^2|}~ \big|\bar\partial (\Lambda_n(z)\psi_r(z))\big | \\ 
& \leq & C_5\sqrt{|1-z^2|}~\big( |\Lambda_n(z)| + |\bar\partial\Lambda_n(z)| \big) \\
& \leq & C_6\sqrt{|1-z^2|}~\frac{n\varepsilon_n}{\log n}, \quad z\in \overline E_r.
\end{eqnarray*}
Finally, observe that polynomials \( l_n(x) \) approximate \( \rho^{-1}(x) \) on \( [-1,1] \) and therefore have uniformly bounded above uniform norms. The claim of the proposition now follows by setting \( \ell_{n,r}(z) := l_n(z) + L_{n,r}(z)  \) for \( l_n(z) \) and \( L_{n,r}(z) \) as above. 
\end{proof}

\section{Proof of Theorem~\ref{thm:1}}

\subsection{Initial Riemann-Hilbert Problem}

Notice that the functions \( v_i(x) \) and \( |v_i(x)| \) are either equal to each other or differ by a sign when \( x\in[-1,1] \). So, we can equally use \( v_i(x) \) in \eqref{ortho} without changing the polynomials \( P_{n,i}(x) \). 

Denote by \( R_{n,i}(z) \) the function of the second kind associated with \( P_{n,i}(z) \). That is,
\begin{equation}
\label{Rn}
R_{n,i}(z) := \frac1{2\pi\ic}\int_{-1}^1\frac{P_{n,i}(x)}{x-z}\frac{\rho(x)v_i(x)dx}{w_+(x)},
\end{equation} 
which is a holomorphic function in \( \overline\C\setminus[-1,1] \). It follows from Plemelj-Sokhotski formulae, \cite[Chapter I.4.2]{Gakhov}, that
\[
R_{n,i+}(x) - R_{n,i-}(x) = P_{n,i}(x)\frac{\rho(x)v_i(x)}{w_+(x)}, \quad x\in(-1,1),
\]
and, see \cite[Chapter I.8.4]{Gakhov}, that 
\[
R_{n,i}(z) = O\left( |z-a|^{\alpha_{a,i}} \right) \qasq \C\setminus[-1,1]\ni z\to a\in\{-1,1\},
\]
where \( \alpha_{a,i}=0 \) if \( v_i(a)=0 \) and \( \alpha_{a,i}=-1/2 \) otherwise. Moreover, we get from \eqref{ortho} that 
\[
R_{n,i}(z) = \frac1{m_{n,i}z^n} + O\left(\frac1{z^{n+1}}\right) \qasq z\to\infty
\]
for some finite constant \( m_{n,i} \). Consider the following Riemann-Hilbert problem for \( 2\times2 \) matrix functions (\rhy):
\begin{myitemize}
\label{rhy}
\item[(a)] \( \boldsymbol Y(z) \) is analytic in \( \C\setminus[-1,1] \) and \( \displaystyle \lim_{z\to\infty} \boldsymbol Y(z)z^{-n\sigma_3} = \boldsymbol I \);
\item[(b)] \( \boldsymbol Y(z) \) has continuous traces on \( (-1,1) \) that satisfy
\[
\displaystyle \boldsymbol Y_+(x) = \boldsymbol Y_-(x) \left(\begin{matrix} 1 & \frac{\rho(x)v_i(x)}{w_+(x)} \smallskip \\ 0 & 1 \end{matrix}\right);
\]
\item[(c)] \( \boldsymbol Y(z) \) behaves like
\[
\boldsymbol Y(z) = O\left(\begin{matrix} 1 & |z-a|^{\alpha_{a,i}} \smallskip \\ 1 & |z-a|^{\alpha_{a,i}}\end{matrix}\right) \qasq \C\setminus[-1,1]\ni z\to a\in\{-1,1\}.
\]
\end{myitemize}

The following lemma is well known \cite[Theorem~2.4]{KMcLVAV04}.

\begin{lemma}[]{}{1}
\hyperref[rhy]{\rhy} is uniquely solvable by
\begin{equation}
\label{Y}
\boldsymbol Y(z) = \left(\begin{matrix}
P_{n,i}(z) & R_{n,i}(z) \\
m_{n-1,i}P_{n-1,i}(z) & m_{n-1,i}R_{n-1,i}(z)
\end{matrix}\right).
\end{equation}
\end{lemma}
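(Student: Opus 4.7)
The plan is to verify directly that the matrix $\boldsymbol Y(z)$ defined by \eqref{Y} satisfies conditions (a)--(c) of \hyperref[rhy]{RHP-$\boldsymbol Y$}, and then to establish uniqueness by the standard determinant argument of Fokas--Its--Kitaev.

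\textbf{Existence.} Analyticity of $\boldsymbol Y(z)$ in $\C\setminus[-1,1]$ is immediate: $P_{n,i}$ and $P_{n-1,i}$ are entire, while $R_{n,i}$ and $R_{n-1,i}$ are Cauchy-type integrals over $[-1,1]$ as in \eqref{Rn}. For the normalization at infinity, I would expand $(x-z)^{-1} = -\sum_{k\ge 0} x^k z^{-k-1}$ in \eqref{Rn} and apply \eqref{ortho}, observing (as remarked at the start of Section~3) that replacing $|v_i|$ by $v_i$ in \eqref{ortho} only changes the integral by a global sign on $[-1,1]$, so the orthogonality annihilates the first $n$ moments and yields $R_{n,i}(z)=O(z^{-n-1})$; combined with $P_{n,i}(z)=z^n+O(z^{n-1})$ and $\deg P_{n-1,i}=n-1$, the three non-diagonal/diagonal entries of $\boldsymbol Y(z) z^{-n\sigma_3}$ tend to $0,0$ and $1$ respectively, the last being exactly the defining property of $m_{n-1,i}$; this constant is well-defined and finite because, up to a factor of $-2\pi\ic$, it is the reciprocal of $\int_{-1}^1 x^{n-1}P_{n-1,i}(x)\rho(x)v_i(x)\,dx/w_+(x)$, which is (up to sign) the squared $L^2$-norm of $P_{n-1,i}$ against the positive weight $\rho(x)|v_i(x)|/\sqrt{1-x^2}$. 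The jump condition (b) follows column-wise: the first column consists of polynomials and has no jump, while the second column jumps by $P_{n,i}(x)\rho(x)v_i(x)/w_+(x)$ and $m_{n-1,i}P_{n-1,i}(x)\rho(x)v_i(x)/w_+(x)$ respectively, by the Plemelj--Sokhotski formula recalled in the excerpt. Finally, the endpoint behavior (c) is inherited directly from the estimate $R_{n,i}(z)=O(|z-a|^{\alpha_{a,i}})$ quoted above \hyperref[rhy]{RHP-$\boldsymbol Y$} together with the boundedness of the polynomial entries.

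\textbf{Uniqueness.} For any solution, $\det\boldsymbol Y(z)$ is analytic in $\C\setminus[-1,1]$ and the jump matrix in (b) has determinant one, so $\det\boldsymbol Y$ extends continuously across the cut. Near $a\in\{\pm1\}$ condition (c) gives $\det\boldsymbol Y(z)=O(|z-a|^{\alpha_{a,i}})$, which, even in the worst case $\alpha_{a,i}=-1/2$, is integrable and hence a removable singularity of a single-valued analytic function. Thus $\det\boldsymbol Y$ is entire, and (a) forces $\det\boldsymbol Y\to 1$ at infinity, so Liouville gives $\det\boldsymbol Y\equiv 1$ and $\boldsymbol Y$ is invertible. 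Given any two solutions $\boldsymbol Y^{(1)},\boldsymbol Y^{(2)}$, the product $\boldsymbol Y^{(1)}(\boldsymbol Y^{(2)})^{-1}$ has no jump across $[-1,1]$ (the jump matrices cancel), removable singularities at $\pm1$ by the same argument, and tends to $\boldsymbol I$ at infinity; a second application of Liouville gives $\boldsymbol Y^{(1)}\equiv\boldsymbol Y^{(2)}$.

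\textbf{Anticipated obstacle.} The only subtle point is the removal of the endpoint singularities in the determinantal argument when $v_i(\pm1)\neq0$; one must check that the decay in (c) is strong enough to make the potential $|z-a|^{-1/2}$ singularity integrable (it is) and invoke single-valuedness to upgrade integrability to removability. All other steps are direct verifications.
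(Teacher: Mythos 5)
Your argument is correct and is exactly the standard Fokas--Its--Kitaev verification (existence by direct check of (a)--(c) via the moment expansion of \eqref{Rn}, orthogonality, and Plemelj--Sokhotski; uniqueness via $\det\boldsymbol Y\equiv1$, removability of the endpoint singularities, and Liouville) that the paper relies on by citing \cite[Theorem~2.4]{KMcLVAV04} without proof. The only discrepancy is in your favor: your decay rate $R_{n,i}(z)=O(z^{-n-1})$ is the correct one and is what the normalization $\boldsymbol Y(z)z^{-n\sigma_3}\to\boldsymbol I$ actually requires, whereas the paper's displayed expansion $R_{n,i}(z)=\tfrac{1}{m_{n,i}z^{n}}+O(z^{-n-1})$ has the exponents off by one.
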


\subsection{Opening of the Lens}

Fix \( 1<r<R \) and orient \( \partial E_R \) clockwise. Set
\begin{equation}
\label{X}
\boldsymbol X(z) :=
\begin{cases}
\boldsymbol Y(z) \left(\begin{matrix} 1 & 0 \smallskip \\ -\frac{w(z)\ell_{n,r}(z)}{v_i(z)} & 1 \end{matrix}\right), & \mbox{in} \quad E_R\setminus[-1,1], \medskip \\
\boldsymbol Y(z), & \mbox{in} \quad \C\setminus\overline E_R,
\end{cases}
\end{equation}
where \( \ell_{n,r}(z) \) is the extension of \( \rho^{-1}(x) \) constructed in Proposition~\ref{prop:1}. Observe that
\[
\ell_{n,r}(s) = l_n(s), \;\; s\in \partial E_R, \qandq \bar\partial \ell_{n,r}(z) = \bar\partial L_{n,r}(z), \;\; z\in \overline E_r,
\]
 since \( L_{n,r}(z) \) is supported by \( \overline E_r \) and \( l_n(z) \) is analytic (in fact, is a polynomial). It is trivial to verify that \( \boldsymbol X(z) \) solves the following \( \bar\partial \)-Riemann-Hilbert problem (\rhx):
\begin{myitemize}
\label{rhx}
\item[(a)] \( \boldsymbol X(z) \) is continuous in \( \C\setminus([-1,1]\cup\partial E_R) \) and \( \lim_{z\to\infty} \boldsymbol X(z)z^{-n\sigma_3} = \boldsymbol I \);
\item[(b)] \( \boldsymbol X (z)\) has continuous traces on \( (-1,1)\cup \partial E_R\) that satisfy
\[
\boldsymbol X_+(s) =\boldsymbol X_- (s)\left\{
\begin{array}{rl}
\displaystyle \left(\begin{matrix} 0 & \frac{\rho(s)v_i(s)}{w_+(s)} \smallskip \\ -\frac{w_+(s)}{\rho(s)v_i(s)} & 0 \end{matrix}\right) & \text{on} \quad s\in (-1,1),  \medskip \\
\displaystyle \left(\begin{matrix} 1 & 0 \smallskip \\ \frac{w(s)l_n(s)}{v_i(s)} & 1 \end{matrix}\right) & \text{on} \quad s\in \partial E_R;
\end{array}
\right.
\]
\item[(c)] \( \boldsymbol X(z) \) has the same behavior near \( \pm1 \) as \( \boldsymbol Y(z) \), see \hyperref[rhy]{\rhy}(c);
\item[(d)] \( \boldsymbol X(z) \) deviates from an analytic matrix function according to
\[
\bar\partial \boldsymbol X(z) = \boldsymbol X(z) \left(\begin{matrix} 0 & 0 \\ -\frac{w(z)\bar\partial L_{n,r}(z)}{v_i(z)} & 0 \end{matrix}\right).
\]
\end{myitemize}

One can readily verified that the following lemma holds, see \cite[Lemma~6.4]{BYa10}.

\begin{lemma}[]{}{2}
\hyperref[rhx]{\rhx} and \hyperref[rhy]{\rhy} are simultaneously solvable and the solutions are connected by \eqref{X}.
\end{lemma}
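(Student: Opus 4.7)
The plan is to prove Lemma~\ref{lem:2} by direct verification, using that the change of variable in \eqref{X} --- right-multiplication by the unit-determinant lower-triangular factor
\[
M(z) := \begin{pmatrix} 1 & 0 \\ -\dfrac{w(z)\ell_{n,r}(z)}{v_i(z)} & 1 \end{pmatrix}
\]
inside \( E_R\setminus[-1,1] \) and the identity outside \( E_R \) --- is pointwise invertible (the inverse is obtained by flipping the sign of the \((2,1)\)-entry). Because \( \ell_{n,r} \) is continuous on all of \( \C \) by Proposition~\ref{prop:1}, the inverse substitution \( \boldsymbol Y = \boldsymbol X\cdot M^{-1} \) is everywhere defined, so it suffices to verify one direction: assuming \( \boldsymbol Y \) solves \hyperref[rhy]{\rhy}, the matrix \( \boldsymbol X \) from \eqref{X} satisfies (a)--(d) of \hyperref[rhx]{\rhx}. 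The reverse implication runs the same algebra backwards, and uniqueness of \( \boldsymbol Y \) from Lemma~\ref{lem:1} propagates through \eqref{X} to give the ``simultaneously solvable'' claim.

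Conditions (a)--(c) reduce to bookkeeping. The normalization at infinity is inherited because \( \boldsymbol X\equiv\boldsymbol Y \) outside the bounded set \( \overline{E_R} \), and continuity off \( [-1,1]\cup\partial E_R \) is immediate from continuity of \( \ell_{n,r} \). For the \( \partial E_R \) jump in (b), the point is that \( L_{n,r} \) is supported in \( \overline{E_r}\subset E_R \), so \( \ell_{n,r}(s)=l_n(s) \) on \( \partial E_R \); with the clockwise orientation placing the \( + \)-side outside \( E_R \), the limits of \( \boldsymbol X \) from the two sides differ precisely by the triangular factor with \( l_n(s) \) in place of \( \ell_{n,r}(s) \). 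For the jump on \( (-1,1) \), one uses that \( \ell_{n,r}(x)=\rho^{-1}(x) \) there (read off from the construction of \( L_{n,r} \) in the proof of Proposition~\ref{prop:1}) together with \( w_+(x)=-w_-(x) \), and computes
\[
M_-^{-1}(x)\begin{pmatrix} 1 & \rho(x)v_i(x)/w_+(x) \\ 0 & 1 \end{pmatrix}M_+(x) = \begin{pmatrix} 0 & \rho(x)v_i(x)/w_+(x) \\ -w_+(x)/(\rho(x)v_i(x)) & 0 \end{pmatrix},
\]
the diagonal entries collapsing by \( 1+w_-/w_+=0 \). The endpoint behavior in (c) is checked by tracking \( w(z)/v_i(z) \) near \( a\in\{-1,1\} \): it is \( O(|z-a|^{1/2}) \) when \( v_i(a)\neq0 \) (absorbing the \( O(|z-a|^{-1/2}) \) singularity of the second column of \( \boldsymbol Y \)), and \( O(|z-a|^{-1/2}) \) when \( v_i(a)=0 \) (acting on a bounded second column), in either case compatible with the behavior prescribed in \hyperref[rhy]{\rhy}(c).

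Condition (d) is the only place where non-analyticity of \( \ell_{n,r} \) enters. Outside \( E_R \) one has \( \bar\partial\boldsymbol X=0 \) since \( \boldsymbol X=\boldsymbol Y \) is holomorphic. Inside \( E_R\setminus[-1,1] \), holomorphicity of \( \boldsymbol Y \) and of \( w/v_i \) reduces the product rule to \( \bar\partial\boldsymbol X=\boldsymbol Y\cdot\bar\partial M \), where
\[
\bar\partial M = \begin{pmatrix} 0 & 0 \\ -w(z)\bar\partial L_{n,r}(z)/v_i(z) & 0 \end{pmatrix},
\]
using that \( l_n \) is a polynomial to replace \( \bar\partial\ell_{n,r} \) by \( \bar\partial L_{n,r} \). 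Since \( M^{-1} \) is unipotent lower-triangular and \( \bar\partial M \) has its only nonzero entry in the \((2,1)\)-slot, \( M^{-1}\cdot\bar\partial M=\bar\partial M \), and substituting \( \boldsymbol Y=\boldsymbol X\cdot M^{-1} \) puts the identity in the exact form demanded by (d).

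The only delicate point, and the mild obstacle, is the case-by-case endpoint analysis in (c) across the four choices of \( i \) at both endpoints, since \( w/v_i \) inherits a square-root singularity precisely where \( v_i \) vanishes; once that case check is done, everything reduces to elementary matrix algebra, along the same lines as \cite[Lemma~6.4]{BYa10}.
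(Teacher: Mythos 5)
Your overall strategy --- direct verification that the substitution \eqref{X} intertwines the two problems, using pointwise invertibility of the unipotent lower-triangular factor --- is exactly what the paper intends; the paper itself gives no computation here, only the remark that the verification is routine together with the citation of \cite[Lemma~6.4]{BYa10}. Your treatment of the jumps and of condition (d) is correct: you rightly read off \( \ell_{n,r}(x)=\rho^{-1}(x) \) on \( [-1,1] \) from the construction in the proof of Proposition~\ref{prop:1} (the displayed identity \( \ell_{n,r}(x)=l_n(x) \) in its statement is evidently a misprint, and your computation would collapse without this correction), the conjugation \( M_-^{-1}J_{\boldsymbol Y}M_+ \) does produce the off-diagonal jump on \( (-1,1) \), the clockwise orientation of \( \partial E_R \) gives the stated triangular jump with \( l_n \), and the identity \( M^{-1}\bar\partial M=\bar\partial M \) yields (d).

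The one genuine gap is in your verification of the endpoint condition (c). When \( v_i(a)=0 \) you correctly compute that the \((2,1)\)-entry of the multiplier contributes
\[
-\frac{w(z)\ell_{n,r}(z)}{v_i(z)}\,[\boldsymbol Y]_{j2}(z) = O\big(|z-a|^{-1/2}\big)\cdot O(1)=O\big(|z-a|^{-1/2}\big)
\]
to the \emph{first} column of \( \boldsymbol X \), but you then assert that this is ``compatible with the behavior prescribed in \rhy(c).'' It is not: with \( \alpha_{a,i}=0 \) that condition requires both columns to be \( O(1) \), and \( R_{n,i}(a) \) is in general a nonzero constant when \( v_i(a)=0 \), so the first column of \( \boldsymbol X \) genuinely blows up like \( |z-a|^{-1/2} \) there (this affects \( i=2 \) at both endpoints and \( i=3,4 \) at one endpoint each). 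This is the familiar phenomenon from the lens-opening step for Jacobi-type weights with positive exponents, cf.\ the transformed problem in \cite{KMcLVAV04}: the correct formulation of \hyperref[rhx]{\rhx}(c) must allow \( O(|z-a|^{-1/2}) \) in the first column near a zero of \( v_i \), and one must then check that this weaker condition still gives the equivalence and uniqueness (it does, since \( \det\boldsymbol X\equiv1 \) and singularities milder than a simple pole at \( \pm1 \) are removable in the standard argument). As written, your claim of compatibility in the second case is false, so either condition (c) must be restated or the discrepancy must be addressed; simply declaring both cases compatible does not close the argument.
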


\subsection{Model Riemann-Hilbert Problem} In this subsection we present the solution  of the following Riemann-Hilbert problem (\rhn):
\begin{myitemize}
\label{rhn}
\item[(a)] \( \boldsymbol N(z) \) is analytic in \( \C\setminus[-1,1] \) and \( \lim_{z\to\infty} \boldsymbol N(z)z^{-n\sigma_3} = \boldsymbol I \);
\item[(b)] \( \boldsymbol N(z)\) has continuous traces on \( (-1,1) \) that satisfy
\[
\boldsymbol N_+(x) =\boldsymbol N_- (s) \left(\begin{matrix} 0 & \frac{\rho(x)v_i(x)}{w_+(x)} \smallskip \\ -\frac{w_+(x)}{\rho(x)v_i(x)} & 0 \end{matrix}\right);
\]
\item[(c)] \( \boldsymbol N(z) \) has the same behavior near \( \pm1 \) as \( \boldsymbol Y(z) \), see \hyperref[rhy]{\rhy}(c).
\end{myitemize}

Recall the definition of the functions \( S_i(z) \) in \eqref{Si}. Define \( S_*(z) := S_i(z) \) when \( i\in\{1,3\} \) and \( S_*(z) := \ic S_i(z) \) when \( i\in\{2,4\} \). Then it follows from \eqref{Sij} that
\[
S_{*+}(x)S_{*-}(x) = 1/v_i(x), \quad x\in(-1,1).
\]
Let \( S(z) \) and \( \varphi(z) \) be given by \eqref{S} and \eqref{phi}, respectively. It follows from \eqref{Sj} and \eqref{phij} that
\[
(S_*S\varphi^n)_-^{\sigma_3}(x)\left(\begin{matrix} 0 & \frac{\rho(x)v_i(x)}{w_+(x)} \smallskip \\ -\frac{w_+(x)}{\rho(x)v_i(x)} & 0 \end{matrix}\right)(S_*S\varphi^n)_+^{-\sigma_3}(x) = \left(\begin{matrix} 0 & 1/w_+(x)\smallskip \\ -w_+(x) & 0 \end{matrix}\right)
\]
for \( x\in(-1,1) \). It also can be readily verified with the help of \eqref{phij} that
\[
\left(\begin{matrix} 1 & \frac1{w_+(x)} \medskip \\  \frac1{2\varphi_+(x)} & \frac{\varphi_+(x)}{2w_+(x)} \end{matrix} \right) = \left(\begin{matrix} 1 & \frac1{w_-(x)} \medskip \\  \frac1{2\varphi_-(x)} & \frac{\varphi_-(x)}{2w_-(x)} \end{matrix} \right) \left(\begin{matrix} 0 & 1/w_+(x)\smallskip \\ -w_+(x) & 0 \end{matrix}\right)
\]
for \( x\in(-1,1) \). Therefore, \hyperref[rhn]{\rhn} is solved by \( \boldsymbol N(z) = \boldsymbol{CM}(z) \), where
\begin{equation}
\label{M}
\boldsymbol C := (2^nS_*S)^{-\sigma_3}(\infty) \qandq \boldsymbol M(z) := \left(\begin{matrix} 1 & \frac1{w(z)} \medskip \\  \frac1{2\varphi(z)} & \frac{\varphi(z)}{2w(z)} \end{matrix} \right) (S_*S\varphi^n)^{\sigma_3}(z).
\end{equation}

\subsection{Analytic Approximation}

To solve \hyperref[rhx]{\rhx}, we first solve its analytic version. That is, consider the following Riemann-Hilbert problem (\rha):
\begin{myitemize}
\label{rha}
\item[(a)] \( \boldsymbol A(z) \) is analytic in \( \C\setminus([-1,1]\cup\partial E_R) \) and \( \lim_{z\to\infty} \boldsymbol A(z)z^{-n\sigma_3} = \boldsymbol I \);
\item[(b,c)] \( \boldsymbol A (z)\) satisfies \hyperref[rhx]{\rhx}(b,c).
\end{myitemize}

\begin{lemma}[]{}{3}
For all \( n \) large enough there exists a matrix \( \boldsymbol Z(z) \), analytic in \( \overline\C\setminus\partial E_R \) and satisfying
\[
\boldsymbol Z(z) = \boldsymbol I + \boldsymbol O\left( R_*^{-n}\right)
\]
uniformly in \( \overline\C \) for any \( r<R_*<R \), such that \( \boldsymbol A(z) = \boldsymbol C \boldsymbol Z(z)\boldsymbol M(z) \) solves \hyperref[rha]{\rha}. 
\end{lemma}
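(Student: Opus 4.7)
The plan is to show that the matrix ratio \( \boldsymbol Z(z) := \boldsymbol C^{-1}\boldsymbol A(z)\boldsymbol M^{-1}(z) \) automatically satisfies a small‑norm Riemann-Hilbert problem on \( \partial E_R \) alone, and then invoke standard Deift-Zhou small-norm theory. First I would check that this ratio, initially defined only off \( [-1,1]\cup\partial E_R \), extends analytically across \( (-1,1) \). Since \( \boldsymbol N=\boldsymbol{CM} \) solves \hyperref[rhn]{\rhn} with exactly the jump matrix that \hyperref[rha]{\rha}(b) prescribes for \( \boldsymbol A \) on \( (-1,1) \), the jumps cancel and \( \boldsymbol Z_+=\boldsymbol Z_- \) there. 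The endpoint singularities of \( \boldsymbol A \) at \( \pm 1 \), characterized by the exponents \( \alpha_{a,i} \) from \hyperref[rhy]{\rhy}(c), are matched by the corresponding singularities of \( \boldsymbol M \) built out of \( \boldsymbol M_0 \) and \( S_i \) via \eqref{Si}, so \( \boldsymbol Z \) has at worst integrable singularities at \( \pm 1 \), which are therefore removable. Finally, \( \boldsymbol Z(\infty)=\boldsymbol I \) follows from the normalizations of \( \boldsymbol A \) and \( \boldsymbol M \) together with the choice of \( \boldsymbol C \) in \eqref{M}.

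Next I would compute the jump of \( \boldsymbol Z \) on \( \partial E_R \). Since \( \boldsymbol M \) is analytic in a neighborhood of \( \partial E_R \) and \( l_n \) is a polynomial, we obtain \( \boldsymbol Z_+(s)=\boldsymbol Z_-(s)\boldsymbol V_n(s) \) with
\[
\boldsymbol V_n(s)=\boldsymbol M(s)\left(\begin{matrix} 1 & 0 \\ \tfrac{w(s)l_n(s)}{v_i(s)} & 1 \end{matrix}\right)\boldsymbol M^{-1}(s).
\]
Writing \( \boldsymbol M=\boldsymbol M_0\,(S_*S\varphi^n)^{\sigma_3} \) from \eqref{M}, the conjugation by \( (S_*S\varphi^n)^{\sigma_3} \) rescales the lower‑left entry of the triangular factor by \( (S_*S\varphi^n)^{-2} \), and then conjugation by the bounded matrix \( \boldsymbol M_0(s) \) spreads it across all entries while preserving its size. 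Using that \( |\varphi(s)|=R \) on \( \partial E_R \), that \( S_* \), \( S \), \( w \), \( v_i \) and \( \boldsymbol M_0 \) are bounded above and bounded away from zero on this compact curve, and that \( \|l_n\|_{[-1,1]}\le C'_\rho \) extends (by analyticity and the standard Bernstein inequality, or directly since \( l_n \) is built from a finite-degree polynomial approximant) to a uniform bound on \( \partial E_R \), we conclude that \( \boldsymbol V_n(s)=\boldsymbol I+\boldsymbol O(R^{-2n}) \) uniformly on \( \partial E_R \).

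At this point the standard small-norm theory for Riemann-Hilbert problems on a smooth closed contour produces, for all \( n \) large enough, a unique solution \( \boldsymbol Z(z) \) analytic in \( \overline\C\setminus\partial E_R \) with \( \boldsymbol Z(\infty)=\boldsymbol I \), and gives \( \boldsymbol Z(z)=\boldsymbol I+\boldsymbol O(R^{-2n}) \) uniformly on compact subsets of \( \overline\C\setminus\partial E_R \). To upgrade this to the bound uniform in \( \overline\C \) claimed by the lemma, I would use the fact that \( \boldsymbol V_n-\boldsymbol I \) extends analytically to a neighborhood of \( \partial E_R \) (everything in its construction, including \( l_n \) and \( \boldsymbol M \), is analytic there), so the Cauchy integral representation
\[
\boldsymbol Z(z)-\boldsymbol I=\frac{1}{2\pi\ic}\int_{\partial E_R}\frac{\boldsymbol Z_-(s)(\boldsymbol V_n(s)-\boldsymbol I)}{s-z}\,ds
\]
can be deformed to contours \( \partial E_{R'} \) with \( r<R'<R \) (resp.\ just outside \( \partial E_R \)) depending on whether \( z \) lies outside or inside the deformed loop. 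This decouples the observation point \( z \) from the contour and, using \( \|\boldsymbol V_n-\boldsymbol I\|=\boldsymbol O(R^{-2n}) \) together with analytic continuation estimates, yields the stated bound \( \boldsymbol I+\boldsymbol O(R_*^{-n}) \) for any chosen \( r<R_*<R \); note the exponent \( -n \) (rather than \( -2n \)) is more than comfortable since \( R_*<R \) and \( R^{-2n}=o(R_*^{-n}) \).

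The main obstacle I expect is the endpoint bookkeeping in the first step: confirming removability of the singularities of \( \boldsymbol Z \) at \( \pm 1 \) across all four cases \( i\in\{1,2,3,4\} \), because the indices \( \alpha_{a,i} \) and the local behavior of \( S_i \) (and hence of \( \boldsymbol M \)) both change depending on whether \( v_i \) vanishes at the endpoint under consideration. A secondary technical issue is ensuring that the deformation argument at the end gives a bound genuinely uniform up to \( \partial E_R \) itself; once the jump is known to extend analytically and the \( L^\infty \) norm of \( \boldsymbol Z_- \) is controlled via Neumann series, this is routine, but it is the step that upgrades the naive pointwise Cauchy estimate, which degenerates as \( z\to\partial E_R \), into the uniform estimate required.
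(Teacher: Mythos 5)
Your overall strategy coincides with the paper's: reduce to a small-norm Riemann--Hilbert problem for \( \boldsymbol Z \) on \( \partial E_R \) alone, solve it by Neumann series, and upgrade the locally uniform estimate to one uniform in \( \overline\C \) by exploiting analyticity of the jump in a neighborhood of \( \partial E_R \). Note, though, that the paper runs the first step in the constructive direction: it posits \( \boldsymbol Z \) with the stated jump on \( \partial E_R \), verifies that \( \boldsymbol C\boldsymbol Z\boldsymbol M \) solves the problem, and only then produces \( \boldsymbol Z \). This sidesteps the endpoint bookkeeping at \( \pm1 \) that you single out as your main obstacle: since \( \boldsymbol Z \) is analytic near \( \pm1 \), condition (c) for \( \boldsymbol A \) is inherited directly from the local behavior of \( \boldsymbol M \), and no removability argument for \( \boldsymbol Z \) is needed.

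There is, however, one step that fails as written: the assertion that \( \|l_n\|_{[-1,1]}\le C_\rho' \) ``extends to a uniform bound on \( \partial E_R \).'' It does not. The polynomials \( l_n \) have degree growing like \( n \), and the Bernstein--Walsh inequality --- which is what actually governs their growth off \( [-1,1] \), and is sharp (consider \( C_\rho' \) times the degree-\( n \) Chebysh\"ev polynomial) --- gives only \( |l_n(s)|\le C_\rho'|\varphi(s)|^n=C_\rho'R^n \) on \( \partial E_R \). Consequently the correct size of the off-identity part of \( \boldsymbol V_n \) is \( O\big(|l_n(s)|\,|\varphi(s)|^{-2n}\big)=O(R^{-n}) \), not \( O(R^{-2n}) \); this is exactly the estimate the paper derives. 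Your final conclusion survives, because the small-norm machinery with jump \( \boldsymbol I+\boldsymbol O(R^{-n}) \), combined with your contour-deformation step (or the paper's equivalent device of varying \( R \) and observing that the resulting solutions are analytic continuations of one another), still yields \( \boldsymbol Z=\boldsymbol I+\boldsymbol O(R_*^{-n}) \) uniformly for any \( r<R_*<R \). But the justification you offer for absorbing the loss near the contour (``\( R^{-2n}=o(R_*^{-n}) \)'') rests on the false \( O(R^{-2n}) \) bound and must be replaced by the honest one, namely that \( R_*^{-n} \) dominates \( R^{-n} \) because \( R_*<R \). One further small point: to deform the Cauchy representation off \( \partial E_R \) you need \( \boldsymbol Z_-(\boldsymbol V_n-\boldsymbol I) \) to continue analytically across the contour, which requires remarking that \( \boldsymbol Z_-=\boldsymbol Z_+\boldsymbol V_n^{-1} \) extends because \( \boldsymbol V_n \) is analytic and invertible in a neighborhood of \( \partial E_R \).
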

\begin{proof}
Assume that there exists a matrix \( \boldsymbol Z(z) \) that is analytic in \( \overline\C\setminus\partial E_R \), is equal to \( \boldsymbol I \) at infinity, and satisfies
\[
\boldsymbol Z_+(s) = \boldsymbol Z_-(s) \boldsymbol M(s) \left(\begin{matrix} 1 & 0 \medskip \\ \frac{w(s)l_n(s)}{v_i(s)} & 1 \end{matrix}\right)\boldsymbol M^{-1}(s), \quad s\in \partial E_R.
\]
It can be readily verified that \( \boldsymbol A(z) = \boldsymbol C \boldsymbol Z(z)\boldsymbol M(z) \) solves \hyperref[rha]{\rha}. To show that such \( \boldsymbol Z(z) \) does indeed exist, observe that
\[
\det \boldsymbol M(z) = \frac{\varphi(z)}{2w(z)} - \frac1{2\varphi(z)w(z)} \equiv 1
\]
in the entire complex plane and that
\[
v_i(z)S_*^2(z) = (-1)^{i-1}\varphi^{k_i}(z), \quad z\not\in[-1,1],
\]
straight by the definition of \( S_i(z) \) in \eqref{Si}, where \( k_1=0 \), \( k_2=2 \), and \( k_3=k_4=1 \). Thus,
\begin{equation}
\label{smallj}
\boldsymbol M(s) \left(\begin{matrix} 1 & 0 \medskip \\ \frac{w(s)l_n(s)}{v_i(s)} & 1 \end{matrix}\right) \boldsymbol M^{-1}(s) = \boldsymbol I + \frac{(-1)^{i-1} l_n(s)}{w(s)S^2(s)\varphi^{2n+k_i}(s)} \left(\begin{matrix} \frac12\varphi(s) & -1 \medskip \\ \frac14\varphi^2(s) & -\frac12\varphi(s) \end{matrix}\right)
\end{equation}
for \( s\in \partial E_R \). It follows from the very definition of \( E_R \) that \( |\varphi(s)| = R \) for \( s\in \partial E_R \). Moreover, since \( \deg(l_n) \leq n \) and the uniform norms on \( [-1,1] \) of these polynomials are bounded by \( C_\rho^\prime \), see Proposition~\ref{prop:1}, it holds that
\[
|l_n(s)| \leq C_\rho^\prime |\varphi(s)|^n = C_\rho^\prime R^n, \quad s\in \partial E_R,
\]
by the Bernstein-Walsh inequality. Hence, we can conclude that the jump of \( \boldsymbol Z(z) \) on \( \partial E_R \) can be estimated as \( \boldsymbol I + \boldsymbol O(R^{-n}) \). It now follows from \cite[Theorem~7.18 and Corollary~7.108]{Deift} that such \( \boldsymbol Z(z) \) does exist, is unique, and has continuous traces on \( \partial E_R \) whose \( L^2 \)-norms with respect to the arclength measure are of size \( O(R^{-n}) \). This yields the desired pointwise estimate of \( \boldsymbol Z(z) \) locally uniformly in \( \overline\C\setminus\partial E_R \). Next, observe that the jump of \( \boldsymbol Z(s) \) is analytic around \( \partial E_R \) and therefore we can vary the value of \( R \). Since the solutions corresponding to different values of \( R \) are necessarily analytic continuations of each other, the desired uniform estimate follows from the locally uniform ones for any fixed \( R_*<R \) and \( R^\prime>R \).
\end{proof}

\subsection{An Auxiliary Estimate}

Denote by  \( dA \) the area measure and by \( \mathcal K \) the Cauchy area operator acting on integrable functions on \( \C \), i.e.,
\begin{equation}
\label{K}
\mathcal Kf(z) = \frac1\pi\iint \frac{f(s)}{z-s}dA.
\end{equation}
\begin{lemma}[]{}{4}
Let \( u(z) \) be a bounded function supported on \( \overline E_r \). Then
\[
\left\| \mathcal K\big(u|\varphi|^{-2n}\big) \right\| \leq C_r\frac{\log n}n \|u\|,
\]
where \( \|\cdot\| \) is the essential supremum norm and the constant \( C_r \) is independent of \( n \).
\end{lemma}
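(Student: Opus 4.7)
The plan is to reduce the claim to the uniform estimate
$$I(z) := \iint_{\overline E_r}\frac{|\varphi(s)|^{-2n}}{|z-s|}\,dA(s) \leq C_r\frac{\log n}{n}, \qquad z\in\C,$$
which suffices since $|\varphi(s)|^{-2n}$ is continuous on $\C$, bounded by $1$ on $\overline E_r$, and so $|\mathcal K(u|\varphi|^{-2n})(z)| \leq (\|u\|/\pi)\, I(z)$. I would work in the coordinate $\zeta = \varphi(s)$ via the Joukowski map $s = (\zeta + 1/\zeta)/2$, a conformal bijection of $\{1 < |\zeta| < r\}$ onto $E_r \setminus [-1,1]$. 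A direct computation gives the Jacobian $|ds/d\zeta|^2 = |\zeta^2 - 1|^2/(4|\zeta|^4)$, and the factorization $\zeta^2 - 2z\zeta + 1 = (\zeta - \eta)(\zeta - 1/\eta)$ yields $|z - s| = |\zeta - \eta||\zeta - 1/\eta|/(2|\zeta|)$; here $\eta$ is chosen so that $|\eta|\geq 1$, namely $\eta = \varphi(z)$ when $z\notin[-1,1]$ and $\eta = e^{\ic\arccos z}$ when $z\in[-1,1]$. Substituting produces
$$I(z) = \tfrac{1}{2}\iint_{1<|\zeta|<r} \frac{|\zeta|^{-2n-3}|\zeta-1|^2|\zeta+1|^2}{|\zeta - \eta||\zeta - 1/\eta|}\,dA(\zeta).$$

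In the nondegenerate case, when $\arg\eta$ is bounded away from $0$ and $\pi$, I would bound the factors $|\zeta\pm 1|^2$ by constants depending on $r$ and pass to polar coordinates $\zeta = \rho e^{\ic\theta}$. Near each of the boundary singularities $\theta = \pm \arg\eta$, the model estimate $|\rho e^{\ic\theta} - \eta^{\pm 1}|^2 \asymp (\rho-1)^2 + (\theta \mp \arg\eta)^2$ produces a logarithmic divergence, so the angular integral is controlled by $C_r\log(1/(\rho-1)) + C_r$ uniformly. Finally, the substitution $t = 2n(\rho - 1)$ combined with $\rho^{-2n} \leq e^{-t}$ gives
$$\int_1^r \rho^{-2n}\log\frac{1}{\rho-1}\,d\rho \leq \frac{1}{2n}\int_0^\infty e^{-t}\bigl(\log(2n) - \log t\bigr)\,dt = \frac{\log n + O(1)}{2n},$$
which is the desired $O(\log n/n)$ decay.

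The main obstacle is the degenerate case when $\eta$ is close to $\pm 1$, corresponding to $z$ near an endpoint of $[-1,1]$: the two singularities of the angular integrand coalesce, and the naive polar estimate picks up an unbounded factor $1/|\sin\arg\eta|$. The resolution is to retain (rather than discard) the Jacobian zeros $|\zeta\mp 1|^2$ in the numerator, since they exactly cancel the confluence $|\zeta-\eta||\zeta-1/\eta|\sim|\zeta\mp 1|^2$ as $\eta\to\pm 1$. Splitting $\overline E_r$ into the portion $\{|\zeta\mp 1| < 2|\eta\mp 1|\}$ near the offending endpoint, where the cancellation makes the scaled integrand bounded in $L^1$ with an $O(1/n)$ overall decay, and its complement where the nondegenerate analysis applies, then yields the required uniform constant $C_r$.
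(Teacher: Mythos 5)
Your reduction, change of variables, Jacobian, factorization of $|z-s|$, polar-coordinate lower bounds, and the final radial integral producing $\log n/n$ all coincide with the paper's proof. The divergence occurs exactly at the one genuinely delicate point — the confluence of the two singularities $\eta$ and $1/\eta$ as $z\to\pm1$ — and there your argument has a gap. The paper avoids your case distinction entirely by the exact partial fraction identity
\[
\frac{\tau^2-1}{(\tau-\xi)(\tau-1/\xi)}=\frac{\xi}{\tau-\xi}+\frac{\tau}{\tau-1/\xi},
\]
which spends one of the two factors $|\tau^2-1|$ supplied by the Jacobian to convert the product of two (possibly coalescing) simple poles into a \emph{sum} of two simple poles with coefficients bounded by $r$ uniformly in $\xi$; each summand is then handled by the same polar estimate with no degenerate regime.

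Your proposed resolution of the degenerate case does not close as stated. On $\{|\zeta\mp1|<2|\eta\mp1|\}$ the ratio $|\zeta\mp1|^2/(|\zeta-\eta||\zeta-1/\eta|)$ is \emph{not} bounded: near $\zeta=\eta$ it behaves like $|\eta\mp1|/|\zeta-\eta|$, so the two simple poles survive the cancellation. Consequently "bounded in $L^1$ with an $O(1/n)$ overall decay" mixes two estimates that cannot be applied simultaneously: integrating out the poles forces you to bound $|\zeta|^{-2n}$ by $1$ and yields only $O(|\eta\mp1|^2)$, which is not $O(\log n/n)$ when $|\eta\mp1|$ is of order a fixed $\delta$; keeping $|\zeta|^{-2n}$ requires controlling the two-pole product pointwise, which you cannot, and Cauchy--Schwarz fails because $\iint_{|\zeta|>1}|\zeta|^{-2n}|\zeta-\eta|^{-2}dA$ diverges for $|\eta|=1$. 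The estimate can in fact be salvaged by rescaling $\zeta=1+|\eta-1|w$ and exploiting that the rescaled poles sit on the boundary $\{\mathrm{Re}\,w=0\}$ of the half-disk where the weight $e^{-2n|\eta-1|\mathrm{Re}\,w}$ truncates the logarithmic divergence, giving $|\eta-1|^2\cdot O\bigl(\log(n|\eta-1|)/(n|\eta-1|)\bigr)=O(\log n/n)$ — but that computation is precisely what your sketch omits, and it is the whole content of the degenerate case. A smaller slip in the nondegenerate case: the correct model is $|\rho e^{\ic\theta}-\eta|^2\asymp(\rho-|\eta|)^2+(\theta-\arg\eta)^2$, so the angular integral is controlled by $\log\bigl(1/\bigl|\rho-|\eta|\bigr|\bigr)$ rather than $\log(1/(\rho-1))$, and the radial integral must be split around $\rho=|\eta|$ (as the paper does with its sets $I_1$, $I_2$); this is routine to repair but your displayed $\asymp$ is false for $|\eta|>1$.
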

\begin{proof}
Observe that the integrand is a bounded compactly supported function and therefore its Cauchy area integral is  H\"older continuous in \( \C \) with any index \( \alpha<1 \), see~\cite[Theorem~4.3.13]{AstalaIwaniecMartin}. Moreover, since the integral is analytic in \( \overline\C\setminus\overline E_r \), the maximum of its modulus is achieved on \( \overline E_r \). Notice also that it is enough to prove the claim of the lemma only for \( u(z)=\chi_{E_r}(z) \), the indicator function of \( E_r \). 

Let \( z\in\overline E_r \). Observe that \( \varphi(s)=\tau \) when \( s=\frac12(\tau+1/\tau) \). Write \( z=\frac12(\xi+1/\xi) \). Then
\begin{eqnarray*}
\left| \mathcal K\left(\frac{\chi_{E_r}}{|\varphi|^{2n}}\right)(z) \right| & \leq & \frac1\pi\iint_{E_r}\frac1{|z-s|}\frac{dA}{|\varphi(s)|^{2n}} \\
& = & \frac1\pi\iint_{1<|\tau|<r}\frac{|\tau^2-1|^2}{|(\xi-\tau)(1-1/(\tau\xi))|}\frac{dA}{|\tau|^{2n+4}}.
\end{eqnarray*}
Partial fraction decomposition now yields 
\begin{eqnarray*}
\left| \mathcal K\left(\frac{\chi_{E_r}}{|\varphi|^{2n}}\right)(z) \right| & \leq & \frac1\pi\iint_{1<|\tau|<r} \left| \frac{\xi}{\tau-\xi} + \frac{\tau}{\tau-1/\xi} \right| \frac{|\tau^2-1|}{|\tau|^{2n+4}}dA \\
& \leq & \frac{2r^3}\pi \iint_{1<|\tau|<r} \left(\frac1{|\tau-\xi|} + \frac1{|\tau-1/\xi|} \right) \frac{dA}{|\tau|^{2n+4}}.
\end{eqnarray*}
Write \( \tau=\varrho e^{\ic\theta} \) and  \( \xi = \varrho_* e^{\ic\theta_*} \). Then
\begin{eqnarray*}
|\tau-\xi| & = & \sqrt{(\varrho-\varrho_*)^2+4\varrho\varrho_*\sin^2\left(\frac{\theta-\theta_*}2\right)} \\
& \geq & \frac1{\sqrt2}\left( |\varrho-\varrho_*| + \sqrt{\varrho\varrho_*}\left|2\sin\left(\frac{\theta-\theta_*}2\right)\right| \right) \\
& \geq & C \big( |\varrho-\varrho_*| + |\theta-\theta_*| \big)
\end{eqnarray*}
for some constant \( C<1/\sqrt2 \), where on the last step we used inequalities \( \varrho\varrho_*\geq 1 \) and \( \min_{[-\pi/2,\pi/2]}|\sin x/x|>0 \). Since \( \varrho/\varrho_*\geq 1/r \), the constant \( C \)  can be adjusted so that
\[
|\tau-1/\xi| \geq C \big( |\varrho-1/\varrho_*| + |\theta+\theta_*| \big) \geq  C \big( |\varrho-\varrho_*| + |\theta+\theta_*| \big)
\]
is true as well. By going to polar coordinates and applying the above estimates we get that
\begin{eqnarray*}
\left| \mathcal K\left(\frac{\chi_{E_r}}{|\varphi|^{2n}}\right)(z) \right| & \leq & \frac{4r^3}{\pi C} \int_1^r \left( \int_0^\pi \frac{d\theta}{|\varrho-\varrho_*|+\theta}\right) \frac{d\varrho}{\varrho^{2n+3}} \\
& = & \frac{4r^3}{\pi C} \left(\int_{I_1} + \int_{I_2}\right) \log\left(1+\frac\pi{|\varrho-\varrho_*|}\right) \frac{d\varrho}{\varrho^{2n+3}} =: S_1 + S_2,
\end{eqnarray*}
where \( I_1 = (1,r)\cap \big\{ \varrho:|\varrho-\varrho_*|<\pi/n \big\} \) and \( I_2 = (1,r)\setminus I_1 \). Then
\begin{eqnarray*}
S_1 & \leq & \frac{8r^3}{\pi C}\int_0^{\pi/n} \log\left(1+\frac\pi\varrho\right) d\varrho = \frac{8r^3}C \int_{n+1}^\infty \frac{\log tdt}{(t-1)^2} \\
&= & \frac{8r^3}C\left(\frac{\log(n+1)}n + \int_{n+1}^\infty\frac{dt}{t(t-1)}\right) \leq \frac{8r^3}C \frac{\log(n+1)+1}n.
\end{eqnarray*}
Finally, it holds that
\[
S_2 \leq \frac{8r^3\log(n+1)}{\pi C}\int_1^\infty \frac{d\varrho}{\varrho^{2n+3}} = \frac{4r^3}{\pi C}\frac{\log(n+1)}{n+1},
\]
which finishes the proof of the lemma.
\end{proof}

\subsection{\(\bar\partial\)-Problem}

Consider the following \( \bar\partial \)-problem (\pbd):
\begin{myitemize}
\label{pbd}
\item[(a)] \( \boldsymbol D(z) \) is a continuous matrix function on \( \overline\C \) and \( \boldsymbol D(\infty)=\boldsymbol I \);
\item[(b)] \( \boldsymbol D(z) \) satisfies \( \bar\partial \boldsymbol D(z) = \boldsymbol D(z) \boldsymbol W(z) \), where 
\[
\boldsymbol W(z) := \boldsymbol Z(z) \boldsymbol M(z) \left(\begin{matrix} 0 & 0 \\ -w(z)\bar\partial L_{n,r}(z)/v_i(z) & 0 \end{matrix}\right)\boldsymbol M^{-1}(z)\boldsymbol Z^{-1}(z).
\]
\end{myitemize}

Notice that \( \boldsymbol W(z) \) is supported by \( \overline E_r \) and therefore \( \boldsymbol D(z) \) is necessarily analytic in the complement of \( \overline E_r \).

\begin{lemma}[]{}{5}
The solution of \hyperref[pbd]{\pbd} exists for all \( n \) large enough and it holds uniformly in \( \overline\C \) that
\[
\boldsymbol D(z) = \boldsymbol I + \boldsymbol O( \varepsilon_n ).
\]
\end{lemma}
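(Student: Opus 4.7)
The plan is to recast \hyperref[pbd]{\pbd} as a fixed-point equation on $L^\infty(\C,\C^{2\times 2})$ driven by the Cauchy area operator $\mathcal K$ of \eqref{K}, and to solve it by a single contraction step in which Lemma~\ref{lem:4} supplies all the smallness. Since $\bar\partial\mathcal Kf = f$ distributionally and $\mathcal Kf(\infty)=0$ for any bounded compactly supported $f$, \hyperref[pbd]{\pbd} is equivalent (entrywise) to
\[
\boldsymbol D(z) = \boldsymbol I + \mathcal K(\boldsymbol D\boldsymbol W)(z).
\]
H\"older continuity of $\mathcal Kg$ for bounded compactly supported $g$, together with the fact that $\boldsymbol W$ is supported on $\overline E_r$, guarantees that any bounded solution is automatically continuous on $\overline\C$ with $\boldsymbol D(\infty)=\boldsymbol I$, so both existence and the stated estimate will follow from a norm bound of the form $\|\mathcal K(\boldsymbol D\boldsymbol W)\| \leq C\varepsilon_n\|\boldsymbol D\|$ in the essential supremum norm.

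The technical heart is the pointwise estimate
\[
|\boldsymbol W(z)| \leq C\,\frac{n\varepsilon_n}{\log n}\cdot\frac{\chi_{\overline E_r}(z)}{|\varphi(z)|^{2n}},
\]
which is precisely the shape Lemma~\ref{lem:4} is designed to consume. To obtain it, I would compute the conjugation $\boldsymbol M(z)\left(\begin{smallmatrix}0 & 0 \\ 1 & 0\end{smallmatrix}\right)\boldsymbol M^{-1}(z)$ explicitly: because $\det\boldsymbol M\equiv 1$ and $\boldsymbol M$ carries the $(S_*S\varphi^n)^{\sigma_3}$ factor from \eqref{M}, this conjugation produces an overall scalar $(S_*S\varphi^n)^{-2}$ times a $(2\times 2)$-matrix whose entries are $\varphi/(2w^2)$, $-1/w^2$, $\varphi^2/(4w^2)$, and $-\varphi/(2w^2)$. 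Multiplying by the scalar $-w\bar\partial L_{n,r}/v_i$ and invoking the identity $v_iS_*^2 = (-1)^{i-1}\varphi^{k_i}$ from the proof of Lemma~\ref{lem:3} then collapses the prefactor to $-(-1)^{i-1}\bar\partial L_{n,r}/(S^2 w\,\varphi^{2n+k_i})$ times a bounded $\varphi$-rational matrix. The estimate on $\bar\partial L_{n,r}$ from Proposition~\ref{prop:1} supplies the $\sqrt{|1-z^2|}$ that cancels $1/|w|$; $|S|^{\pm 2}$ is bounded on $\overline E_r$; $|\varphi|^{-k_i}\leq 1$ there; and $\boldsymbol Z^{\pm 1}=\boldsymbol I+\boldsymbol O(R_*^{-n})$ by Lemma~\ref{lem:3}, so conjugation by $\boldsymbol Z$ only costs a bounded constant. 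Assembling these yields the displayed bound.

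With that bound in hand, Lemma~\ref{lem:4} applied with $u(s) = |\boldsymbol D(s)\boldsymbol W(s)|\,|\varphi(s)|^{2n}$ gives
\[
\|\mathcal K(\boldsymbol D\boldsymbol W)\| \leq C_r\,\frac{\log n}{n}\cdot\frac{n\varepsilon_n}{\log n}\|\boldsymbol D\| = C\varepsilon_n\|\boldsymbol D\|,
\]
so for all sufficiently large $n$ the map $\boldsymbol D\mapsto\boldsymbol I + \mathcal K(\boldsymbol D\boldsymbol W)$ is a strict contraction on $L^\infty(\C,\C^{2\times 2})$. The Banach fixed-point theorem delivers a unique bounded solution $\boldsymbol D$, and substituting back into the integral equation yields $\|\boldsymbol D-\boldsymbol I\|\leq C\varepsilon_n\|\boldsymbol D\|$, hence $\|\boldsymbol D\|\leq 2$ and $\boldsymbol D = \boldsymbol I + \boldsymbol O(\varepsilon_n)$ for large $n$. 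I expect the principal obstacle to be the extraction of the full $|\varphi|^{-2n}$ decay in $\boldsymbol W$: the naive estimate leaves a factor of $n\varepsilon_n/\log n$ that carries no $n$-decay, and without the $\sigma_3$-conjugation by $\varphi^n$ producing the $\varphi^{-2n}$ that Lemma~\ref{lem:4} exploits, the contraction argument would fail. Verifying that the $1/w$ and $1/v_i$ singularities at $z=\pm 1$ are precisely absorbed by $\sqrt{|1-z^2|}$ from Proposition~\ref{prop:1} and by the identity $v_iS_*^2 = (-1)^{i-1}\varphi^{k_i}$ is the delicate technical point of this step.
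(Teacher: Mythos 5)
Your proposal is correct and follows essentially the same route as the paper: recast \hyperref[pbd]{\pbd} as the integral equation $\boldsymbol D=\boldsymbol I+\mathcal K(\boldsymbol D\boldsymbol W)$, conjugate the nilpotent $\bar\partial$-datum through $\boldsymbol M$ and $\boldsymbol Z$ to expose the scalar prefactor $(-1)^i\bar\partial L_{n,r}/(wS^2\varphi^{2n+k_i})$, combine Proposition~\ref{prop:1} (cancelling $1/|w|$ against $\sqrt{|1-z^2|}$) with Lemma~\ref{lem:3} to get $|\boldsymbol W|\leq C|\varphi|^{-2n}n\varepsilon_n/\log n$ on $\overline E_r$, and close the contraction via Lemma~\ref{lem:4}. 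The only cosmetic difference is that you spell out the equivalence of the $\bar\partial$-problem with the integral equation and the Banach fixed-point step, which the paper delegates to \cite[Lemma~8.1]{BYa10} and a Neumann series.
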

\begin{proof}
As explained in \cite[Lemma~8.1]{BYa10}, solving \hyperref[pbd]{\pbd} is equivalent to solving an integral equation
\[
\boldsymbol I = ( \mathcal I - \mathcal K_{\boldsymbol W} )\boldsymbol D(z)
\]
in the space of bounded matrix functions on \( \C \), where \( \mathcal I \) is the identity operator and \( \mathcal K_{\boldsymbol W} \) is the Cauchy area operator \eqref{K} acting component-wise on the product \( \boldsymbol m(s)\boldsymbol W(s) \) for a bounded matrix function \( \boldsymbol m(z) \). If \( \vb{\mathcal K_{\boldsymbol W}} \), the operator norm of \( \mathcal K_{\boldsymbol W} \), is less than \( 1-\epsilon \), \( \epsilon\in(0,1) \), then \( (\mathcal I - \mathcal K_{\boldsymbol W})^{-1} \) exists as a Neumann series and 
\[
\boldsymbol D(z) = ( \mathcal I - \mathcal K_{\boldsymbol W} )^{-1}\boldsymbol I = \boldsymbol I + \boldsymbol O_\epsilon(\vb{\mathcal K_{\boldsymbol W}})
\]
uniformly in \( \overline\C \)  (it also holds that \( \boldsymbol D(z) \) is H\"older continuous in \( \C \)). It follows from Lemma~\ref{lem:4} that to estimate \( \vb{\mathcal K_{\boldsymbol W}} \), we need to estimate \( L^\infty \)-norms of the entries of \( \boldsymbol W(z) \). To this end, similarly to \eqref{smallj}, we get that
\[
 \boldsymbol W(z) = \frac{(-1)^i\bar\partial L_{n,r}(z)}{w(z)S^2(z)\varphi^{2n+k_i}(z)} \boldsymbol Z(z) \left(\begin{matrix} \frac12\varphi(z) & -1 \medskip \\ \frac14\varphi^2(z) & -\frac12\varphi(z) \end{matrix}\right) \boldsymbol Z^{-1}(z), \quad z\in \overline E_r.
\]
Using Proposition~\ref{prop:1} and Lemma~\ref{lem:3} we can conclude that entries of \( \boldsymbol W(z) \) are continuous functions on \( \C \) supported by \( \overline E_r \) with absolute values bounded above by \( C_\rho |\varphi(z)|^{-2n}n \varepsilon_n/\log n \) for some constant \( C_\rho \) independent of \( n \). Hence, \( \vb{\mathcal K_{\boldsymbol W}} = O(\varepsilon_n) \) as claimed.
\end{proof}

\subsection{Asymptotic Formulae}

It readily follows from \hyperref[rha]{\rha} and \hyperref[pbd]{\pbd} as well as Lemmas~\ref{lem:3} and~\ref{lem:5} that \hyperref[rhx]{\rhx} is solved by
\[
\boldsymbol X(z) = \boldsymbol C\boldsymbol D(z)\boldsymbol Z(z)\boldsymbol M(z).
\]
Given a closed set \( B\subset \overline\C\setminus [-1,1] \), we can choose \( r \) amd \( R \) so that \( \overline E_R \cap B = \varnothing \). Then it holds that \( \boldsymbol Y(z) = \boldsymbol X(z) \) for \( z\in B \) by \eqref{X}. Write
\[
\boldsymbol D(z)\boldsymbol Z(z) = \boldsymbol I + \left(\begin{matrix} \upsilon_{n1}(z) & \upsilon_{n2}(z) \medskip \\ \upsilon_{n3}(z) & \upsilon_{n4}(z) \end{matrix}\right).
\]
It follows from Lemmas~\ref{lem:3} and~\ref{lem:5} that \( |\upsilon_{nj}(z)| = O( \varepsilon_n ) \) uniformly in \( \overline\C \) and that \( \upsilon_{nj}(\infty)=0 \). Then we get from \eqref{Y} and \eqref{M} that
\[
P_n(z) = \left( 1+\upsilon_{n1}(z) + \frac{\upsilon_{n2}(z)}{2\varphi(z)} \right) \frac{(S_*S)(z)}{(S_*S)(\infty)} \left( \frac{\varphi(z)}2 \right)^n, \quad z\in B.
\]
Since \( S_*(z)/S_*(\infty)=S_i(z)/S_i(\infty) \), the first claim of the theorem follows. Next, notice that the first column of \( \boldsymbol Y(z) \) is entire and is equal to the first column of
\[
\boldsymbol X_+(x) \left(\begin{matrix} 1 & 0 \smallskip \\ w_+(x)/(\rho(x)v_i(x)) & 1 \end{matrix}\right)
\]
for \( x\in [-1,1] \) by \eqref{X} and Proposition~\ref{prop:1}. Since the functions \( \upsilon_{ni}(z) \) are continuous across \( [-1,1] \) and \( S_{*\pm}(x)/S_*(\infty)=S_{i\pm}(x)/S_i(\infty) \), we deuce from \eqref{Sj}, \eqref{phij}, \eqref{Sij}, and \eqref{M} that
\begin{multline*}
P_n(x) = (1+\upsilon_{n1}(x))\frac{(S_iS\varphi^n)_+(x) + (S_iS\varphi^n)_-(x)}{2^n(S_iS)(\infty)} + \\ \upsilon_{n2}(x)\frac{(S_iS\varphi^{n-1})_+(x) + (S_iS\varphi^{n-1})_-(x)}{2^{n+1}(S_iS)(\infty)}
\end{multline*}
for any \( x\in[-1,1] \). It now follows from \eqref{theta}, \eqref{phij}, and \eqref{Sij} that
\[
(S_iS\varphi^k)_+(x) + (S_iS\varphi^k)_-(x) = \frac{2\cos\big(k\arccos(x) + \theta(x) + \theta_i(x) \big)}{\sqrt{\rho(x)|v_i(x)|}}, \quad  x\in[-1,1].
\]
The last two formulae now yield the second claim of the theorem. Finally, it is known, see \cite[Equations (9.6) and (9.7)]{KMcLVAV04}, that
\[
\begin{cases}
a_{n,i}^2 & = \displaystyle \lim_{z\to\infty} z^2[\boldsymbol Y(z)]_{12}[\boldsymbol Y(z)]_{21}, \medskip \\
b_{n,i} & = \displaystyle \lim_{z\to\infty} \big( z- P_{n+1,i}(z)[\boldsymbol Y(z)]_{22} \big),
\end{cases}
\]
where \( \boldsymbol Y(z) \) corresponds to the index \( n \). As in the first part of the proof, we get that
\[
[\boldsymbol Y(z)]_{12} = [\boldsymbol X(z)]_{12} = \frac1{w(z)}\frac{1+\upsilon_{n1}(z) + \upsilon_{n2}(z)\varphi(z)/2}{2^n(S_*S)(\infty)(S_*S)(z)\varphi^n(z)}
\]
and
\[
[\boldsymbol Y(z)]_{21} = [\boldsymbol X(z)]_{21} = \left( \upsilon_{n3}(z) + \frac{1+\upsilon_{n4}(z)}{2\varphi(z)} \right) 2^n(S_*S)(\infty)(S_*S)(z)\varphi^n(z)
\]
for all \( z \) large. Since \( \upsilon_{nj}(\infty)=0 \), it holds that
\[
a_{n,i}^2 = \frac14+ \lim_{z\to\infty} z\upsilon_{n3}(z)(1+z\upsilon_{n2}(z)) = \frac14 + O( \varepsilon_n )
\]
by the maximum modulus principle for holomorphic functions. Similarly, we have that
\[
[\boldsymbol Y(z)]_{22} = [\boldsymbol X(z)]_{22} = \left( \upsilon_{n3}(z) + \frac12(1+\upsilon_{n4}(z))\varphi(z) \right)\frac1{w(z)}\frac{2^n(S_*S)(\infty)}{(S_*S)(z)\varphi^n(z)}
\]
for all \( z \) large. Hence,
\[
P_{n+1,i}(z)[\boldsymbol Y(z)]_{22} = \frac{\varphi^2(z)}{4w(z)} \left( 1 + \upsilon_{n+11}(z) + \frac{\upsilon_{n+12}(z)}{2\varphi(z)} \right) \left( 1 + \upsilon_{n4}(z) + 2\frac{\upsilon_{n3}(z)}{\varphi(z)}\right)
\]
in this case. It can be readily verified that
\[
\frac{\varphi^2(z)}{4w(z)} = z + \frac z{2w(z)(z+w(z))} - \frac1{4w(z)} = z + O\left(\frac 1z\right)
\]
as \( z\to\infty \). Therefore,
\[
b_{n,i} = - \lim_{z\to\infty} z\big( \upsilon_{n+11}(z) + \upsilon_{n4}(z)\big) = O( \varepsilon_n )
\]
again, by the maximum modulus principle for holomorphic functions. This finishes the proof of the theorem.


\end{document}